\numberwithin{equation}{section}
\renewcommand{\m}{\mathfrak{m}}
\newcommand{\pd}{\operatorname{pd}}
\newcommand{\fd}{\operatorname{fd}}
\renewcommand{\Spec}{\operatorname{Spec}}
\newcommand{\depth}{\operatorname{depth}}
\renewcommand{\leq}{\leqslant}
\renewcommand{\geq}{\geqslant}
\title{A Kunz-type characterization of regular rings via alterations}
\author{Linquan Ma}
\thanks{Ma was supported in part by NSF Grant DMS \#1836867/1600198.}
\author{Karl Schwede}
\thanks{Schwede was supported in party by NSF CAREER Grant DMS \#1252860/1501102 and NSF Grant \#1801849.}
\address{Department of Mathematics\\ Purdue University\\  West Lafayette\\ IN 47907}
\email{ma326@purdue.edu}
\address{Department of Mathematics\\ University of Utah\\ Salt Lake City\\ UT 84112}
\email{schwede@math.utah.edu}
\subjclass[2010]{14F18, 13A35, 14F17, 13D05, 13D45, 14B15, 14B05}
\keywords{multiplier ideals, projective dimension, regular rings, rational singularities}
\begin{document}

\begin{abstract}
We prove that a local domain $R$, essentially of finite type over a field, is regular if and only if for every regular alteration $\pi : X \to \Spec R$, we have that $\myR \pi_* \O_X$ has finite (equivalently zero in characteristic zero) projective dimension.
\end{abstract}

\maketitle

\section{Introduction}

In \cite{KunzCharacterizationsOfRegularLocalRings} Kunz proved that a Noetherian ring of characteristic $p > 0$ is regular if and only if the $e$-th iterated Frobenius map
\[
\xymatrix@R=3pt{
R \ar[r] & R \\
x \ar@{|->}[r] & x^{p^e}
}
\]
is flat for some, or equivalently every, $e > 0$. This is generalized in \cite{RodicioOnaResultAvramov}: the condition $\fd_RR^{(e)}<\infty$ (for some, or equivalently every, $e>0$) implies $R$ is regular, where $R^{(e)}$ denotes the target of the $e$-th Frobenius map. Moreover, the direct limit of $R^{(e)}$ is the perfection $R^\infty$ of $R$. Kunz's theorem can also be generalized using perfection: $R$ is regular if and only if $\fd_RR^{\infty}<\infty$ \cite{AberbachLiAsymptoticVanishingRegular,BhattIyengarMaRegularRingsPerfectoidAlgebras}.

However, in all these characterizations, the Frobenius map plays a prominent role and hence they do not extend to characteristic zero. In this paper, motivated by the connections between multiplier ideals and test ideals \cite{BlickleSchwedeTuckerTestAlterations}, we prove the following characterization of regularity using alterations.

\begin{mainthm*}
Suppose $(R, \fram)$ is a local domain essentially of finite type over a field.  Then the following conditions are equivalent.
\begin{itemize}
\item[(a)]  $R$ is regular.
\item[(b)]  For every regular alteration $\pi : X \to \Spec R$, $\pd_R \myR \pi_* \O_X < \infty$ (i.e., the derived image of the structure sheaf has finite projective dimension).
\end{itemize}
Moreover, when $R$ has characteristic $0$, the above are also equivalent to
\begin{itemize}
\item[(c)]  For every regular alteration $\pi : X \to \Spec R$, $\pd_R \myR \pi_* \O_X = 0$.
\end{itemize}
\end{mainthm*}

This result is also motivated by the fact that there is close connection between big Cohen-Macaulay algebras and $\myR \pi_* \O_X$ (recall that the latter is a Cohen-Macaulay complex, \cite{RobertsCohenMacaulayComplexes}).

In fact, this theorem in characteristic $p>0$ essentially follows from a result of Bhatt on killing cohomology using finite covers \cite{BhattDerivedsplintersinpositivecharacteristic} and the characterization of regularity using $R^+$, see \cite{AberbachLiAsymptoticVanishingRegular,BhattIyengarMaRegularRingsPerfectoidAlgebras}. Our main contribution is the characteristic zero case.  Meanwhile, it is quite natural to ask whether the same characterization of regularity holds in mixed characteristic:

\begin{question}
Suppose $(R, \fram)$ is an excellent local domain essentially of finite type over the integers. If for every regular alteration $\pi : X \to \Spec R$, we have that $\pd_R \myR \pi_* \O_X < \infty$, then is $R$ regular?
\end{question}

See \autoref{rmk.MixedCharacteristicSurface} for some additional discussion.  Finally, one should also compare with the classical result that if $R \subseteq S$ is a flat local extension and $S$ is regular, then so is $R$, see \cite[Theorem 23.7]{MatsumuraCommutativeRingTheory}.

\subsubsection*{Acknowledgements}  We thank Bhargav Bhatt and Srikanth Iyengar for comments on a previous draft of this article.

\section{Preliminaries}
Suppose $R$ is a Noetherian ring. For a bounded above cochain complex of $R$-modules $C$ ($\dots \to C^{-1} \to C^0 \to C^1 \to \dots$), we define the \emph{projective dimension of $C$}, denoted $\pd_R C$ to be
\[
\inf \{ \sup \{ i \;|\; P^{-i} \neq 0 \} \;|\; P^{\bullet} \text{ is a projective resolution of $C$} \}.
\]
Here a projective resolution is a cochain complex of projective modules quasi-isomorphic to $C$. If $(R, \fram)$ is additionally local, we also define
\[
\depth C = \inf \{ i \;|\; H^i_{\fram}(C) \neq 0 \}.
\]
These are natural extensions of the projective dimension and the depth of a finitely generated $R$-module.

We will need the following result of Foxby and Iyengar, which is a vast generalization of the classical Auslander-Buchsbaum formula.
\begin{theorem}[\cite{FoxbyIyengarDepthAmplitude}]
\label{thm.BuchEisComplex}
Let $(R,\m)$ be a local ring and let $M$ and $P$ be complexes of $R$-modules.  If $\pd_R P$ is finite and ${\mathrm{H}}(P)$ is nonzero and finitely generated, then
\[
\depth_R M = \depth_R(M \otimes^{\bf L}_R P) + \pd_R P.
\]
\end{theorem}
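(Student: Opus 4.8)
The plan is to reduce the identity to a purely homological bookkeeping computation built on the equality
\(\depth_R N = \inf\{\, i : \Ext^i_R(k,N) \neq 0 \,\}\) for \(N \in D(R)\), where \(k := R/\fram\) (this agrees with the local‑cohomology definition recalled above, and I would take it as known). Thus the whole problem becomes: compute the cohomology of \(\myR\Hom_R(k,\, M \otimes^{\bf L}_R P)\) in terms of that of \(\myR\Hom_R(k,M)\) and of the number \(\pd_R P\).

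The first step is to replace \(P\) by a minimal representative. Since \(\pd_R P < \infty\) and \(H(P)\) is finitely generated (hence bounded), \(P\) is quasi‑isomorphic to a bounded, minimal complex of finitely generated free \(R\)-modules, say concentrated in cohomological degrees \([-s,-r]\) with \(P^{-s} \neq 0\); by the definition of projective dimension this forces \(\pd_R P = s\). In particular \(P\) is a perfect complex, hence dualizable: with \(P^\vee := \myR\Hom_R(P,R)\) one has a natural isomorphism \(M \otimes^{\bf L}_R P \simeq \myR\Hom_R(P^\vee, M)\), and tensor–Hom adjunction then yields
\[
\myR\Hom_R\bigl(k,\, M \otimes^{\bf L}_R P\bigr) \;\simeq\; \myR\Hom_R\bigl(k \otimes^{\bf L}_R P^\vee,\; M\bigr).
\]
Now \(k \otimes^{\bf L}_R P^\vee \simeq \myR\Hom_R(P,k)\) is a bounded complex of finite‑dimensional \(k\)-vector spaces, hence formal; minimality of \(P\) makes the differential of \(\Hom_R(P,k)\) vanish, so in \(D(R)\) we get \(k \otimes^{\bf L}_R P^\vee \simeq \bigoplus_{n} k[-n]^{\oplus \beta_n}\) with \(\beta_n = \rank_R P^{-n}\). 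In particular \(\beta_n = 0\) for \(n>s\) and \(\beta_s \neq 0\), so \(\sup\{\, n : \beta_n \neq 0 \,\} = s = \pd_R P\).

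Feeding this back in, \(\myR\Hom_R(k,\, M \otimes^{\bf L}_R P) \simeq \bigoplus_n \myR\Hom_R(k,M)[n]^{\oplus \beta_n}\) (a finite direct sum), hence
\[
\Ext^i_R\bigl(k,\, M \otimes^{\bf L}_R P\bigr) \;\cong\; \bigoplus_{n} \Ext^{\,i+n}_R(k,M)^{\oplus \beta_n}.
\]
Taking the infimum over the \(i\) for which the right‑hand side is nonzero, and using that the largest \(n\) with \(\beta_n \neq 0\) equals \(\pd_R P\), one reads off \(\depth_R(M \otimes^{\bf L}_R P) = \depth_R M - \pd_R P\), which is exactly the asserted formula. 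The hypothesis that \(H(P)\) is nonzero is used only to ensure \(\pd_R P\) is a genuine (finite) integer; no hypothesis on \(M\) is needed, the cases \(\depth_R M = \pm\infty\) being covered by the obvious conventions since only finitely many \(\beta_n\) occur.

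I expect the only genuinely delicate points to be (i) the reduction to a minimal perfect complex, and (ii) matching homological and cohomological grading conventions so that \(\pd_R P\) emerges with the correct sign as the top nonvanishing degree of \(\myR\Hom_R(P,k)\) — together with checking that the two structural isomorphisms invoked (dualizability of the perfect complex \(P\), and formality of a complex of \(k\)-vector spaces) are valid for an arbitrary, possibly unbounded, complex \(M\). Once these are pinned down the argument is routine, and specializing to \(M = R\) recovers the complex version of the Auslander–Buchsbaum formula, \(\depth_R R = \depth_R P + \pd_R P\).
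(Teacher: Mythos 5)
The paper does not actually prove this statement --- it is imported verbatim from Foxby--Iyengar --- so there is no internal argument to compare against; I can only assess your proof on its own terms, and it is correct. It is, in essence, the standard proof of the derived Auslander--Buchsbaum formula: replace $P$ by a minimal bounded complex of finite free modules, use dualizability of perfect complexes plus adjunction to rewrite $\myR\Hom_R(k, M\otimes^{\bf L}_R P)$ as $\myR\Hom_R(k\otimes^{\bf L}_R P^\vee, M)$, observe that minimality makes $k\otimes^{\bf L}_R P^\vee \simeq \myR\Hom_R(P,k)$ a finite direct sum of shifts of $k$ with top shift $\pd_R P$, and read off the depth from the resulting decomposition of $\Ext^{\bullet}_R(k, M\otimes^{\bf L}_R P)$. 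The two inputs you take for granted are genuine but standard: (i) the identification $\depth_R N = \inf\{i : \Ext^i_R(k,N)\neq 0\}$ for arbitrary complexes, which agrees with the local-cohomology definition used in the paper (this equivalence is itself part of the Foxby--Iyengar framework and is fair to cite); and (ii) the fact that a complex with nonzero, finitely generated homology and finite projective dimension over a Noetherian local ring admits a minimal bounded free resolution whose length equals $\pd_R P$. For (ii) it is worth saying one more sentence: minimality gives $H^{-s}(P\otimes^{\bf L}_R k)\cong P^{-s}\otimes_R k\neq 0$, i.e.\ $\Tor^R_s(P,k)\neq 0$, which rules out any shorter projective resolution and pins down $\pd_R P=s$; as written you assert this follows ``by the definition of projective dimension,'' which elides exactly this point. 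Your closing remarks about unbounded $M$ and the degenerate cases $\depth_R M=\pm\infty$ are handled correctly, since every isomorphism you invoke (strong dualizability of $P$, adjunction, finiteness of the direct sum) is valid for arbitrary $M$.
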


We can now prove our main result in positive characteristic.

\begin{proof}[Proof of Main Theorem in characteristic $p>0$]
First of all by \autoref{thm.BuchEisComplex}, $$\pd_R \myR\pi_*\O_X+\depth \myR\pi_*\O_X=\depth R.$$
Since $\depth \myR\pi_*\O_X\geq 0$ (as $\myR\pi_*\O_X$ lives in positive degree), we know that for all regular alteration, $\pd_R\myR\pi_*\O_X\leq \depth R$.

By \cite[Theorem 1.5]{BhattDerivedsplintersinpositivecharacteristic}, for every regular alteration $\pi$: $X\to \Spec R$, there exists another regular alteration $\pi'$: $Y\xrightarrow{f} X\to \Spec R$ such that the map $\tau_{\geq 1}\myR \pi_*\O_X\to \tau_{\geq 1}\myR \pi'_*\O_Y$, induced by the diagram of triangles below, is $0$.
\[\xymatrix{
\pi_*\O_X \ar[r] \ar[d] & \myR \pi_*\O_X \ar[r] \ar[d] & \tau_{\geq 1}\myR \pi_*\O_X \ar[d]^0 \ar[r]^{+1} & \\
\pi'_*\O_Y \ar[r] & \myR \pi'_*\O_Y \ar[r] & \tau_{\geq 1}\myR \pi'_*\O_Y \ar[r]_{+1} &
}
\]
Tensoring with $k=R/\m$ and taking cohomology, for all $i>\depth R$ we get:
\footnotesize
\[\xymatrix{
0=\myH^{-i-1}(\myR \pi_*\O_X\otimes^\myL k) \ar[r]& \myH^{-i-1} (\tau_{\geq 1}\myR \pi_*\O_X \otimes^\myL k)\ar[d]^0 \ar[r] & \myH^{-i}(\pi_*\O_X\otimes^\myL k) \ar[r] \ar[d] & 0=\myH^{-i}(\myR \pi_*\O_X\otimes^\myL k)  \\
0=\myH^{-i-1}(\myR \pi'_*\O_Y\otimes^\myL k) \ar[r]& \myH^{-i-1} (\tau_{\geq 1}\myR \pi_*\O_X \otimes^\myL k)   \ar[r] & \myH^{-i}(\pi'_*\O_Y\otimes^\myL k) \ar[r] & 0=\myH^{-i}(\myR \pi_*\O_X\otimes^\myL k)
}
\]
\normalsize

An easy diagram chasing shows that for all regular alteration $X$, we can find another regular alteration $Y$ such that $\myH^{-i}(\pi_*\O_X\otimes^\myL k)\to  \myH^{-i}(\pi'_*\O_Y\otimes^\myL k)$ is $0$.  By writing $R^+$ as the colimit over finite domain extensions of $R$, $R^+ = \lim_{S \supseteq R} S$, we see that $\myH^{-i}(R^+\otimes^\myL k)=\Tor_i^R(R^+,k)=0$ for all $i>\depth R$. Now by \cite[Corollary 3.5]{AberbachLiAsymptoticVanishingRegular} or \cite[Theorem 4.13]{BhattIyengarMaRegularRingsPerfectoidAlgebras}, $R$ is regular.
\end{proof}

We recall the following very useful result of Corso-Huneke-Katz-Vasconcelos.
\begin{theorem}\textnormal{(\cite[Corollary 3.3]{CorsoHunekeKatzVasconcelos})}
\label{thm.CorsoHunekeKatzVasconcelos}
Suppose that $(R, \fram)$ is a Noetherian local ring and that $I$ is integrally closed and $\fram$-primary. Then $M$ has
projective dimension less than $t$ if and only if $\Tor^R_t(R/I,M) = 0$.
\end{theorem}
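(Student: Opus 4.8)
The forward implication is immediate and uses nothing about $I$: if $\pd_R M<t$ then $\Tor_i^R(N,M)=0$ for every $R$-module $N$ and every $i\geq t$, in particular $\Tor_t^R(R/I,M)=0$. So the content is the converse. Assume $M$ is finitely generated and let $F_\bullet\to M$ be a minimal free resolution. I would first reduce to the case $t=1$: writing $\Omega=\Omega^{t-1}M$ for the $(t-1)$st syzygy, the truncation $\cdots\to F_t\to F_{t-1}\to\Omega\to 0$ is a minimal free resolution of $\Omega$, dimension shifting gives $\Tor_t^R(R/I,M)\cong\Tor_1^R(R/I,\Omega)$, and $\pd_R M<t$ holds precisely when $\Omega$ is free. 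So everything reduces to the assertion: \emph{if $I$ is $\fram$-primary and integrally closed and $\Tor_1^R(R/I,\Omega)=0$ for a finitely generated $R$-module $\Omega$, then $\Omega$ is free.}

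For this, take $F\xrightarrow{\varepsilon}\Omega$ with $F$ finite free, $\varepsilon$ surjective, and $\ker\varepsilon\subseteq\fram F$ (the first step of a minimal free resolution), and set $N:=\ker\varepsilon$. Tensoring $0\to N\to F\xrightarrow{\varepsilon}\Omega\to 0$ with $R/I$ and using that $F$ is free identifies $\Tor_1^R(R/I,\Omega)$ with $\ker\big(N/IN\to F/IF\big)=(N\cap IF)/IN$. Thus the hypothesis says exactly $N\cap IF=IN$, and we must deduce $N=0$. By Nakayama it suffices to show $N\subseteq IF$: then $N=N\cap IF=IN\subseteq\fram N\subseteq N$, so $\fram N=N$ and $N=0$.

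The inclusion $N\subseteq IF$ is the only place integral closure enters, and it is the main obstacle. The key point is that the submodule $IF$ of the free module $F$ is integrally closed: its integral closure is computed coordinate-wise and equals $\overline I\,F=IF$ since $I=\overline I$. Hence it suffices to show $N\subseteq\overline{IF}$, i.e.\ that every $v\in N$ is integral over the submodule $IF$, and here one must convert the single homological identity $N\cap IF=IN$ into an integral-dependence relation. I would do this with a determinant-trick argument. For a minimal generator $v$ of $N$ one has $v\notin IF$ (because $N\cap IF=IN\subseteq\fram N$ and $v\notin\fram N$); the colon ideal $\frb:=(IF:_R v)$ equals $(IN:_R v)$ (if $rv\in IF$ then $rv\in N\cap IF=IN$) and strictly contains $I$ (as $\frb/I=\Ann_{R/I}(\overline v)$, and the nonzero element $\overline v\in F/IF$ has all coordinates in the nilpotent maximal ideal of the Artinian ring $R/I$, so this annihilator is nonzero); and $\frb v\subseteq IN$. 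Feeding the relation ``$\frb$ carries $v$ into $IN$'' through the determinant trick should force the coordinates of $v$ to be integral over $I$, hence to lie in $I=\overline I$, contradicting $v\notin IF$. I expect the careful version of this step, in particular handling zerodivisors and the non‑domain case (probably by arguing with the finitely many Rees valuations of $I$ rather than naively), to be the crux of the whole argument.

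As a sanity check, the regular case is easy without any of this: over a regular local ring Tor‑rigidity propagates $\Tor_t^R(R/I,M)=0$ to the vanishing of all $\Tor_i^R(R/I,M)$ with $i\geq t$, while $\Tor_{\pd_R M}^R(R/I,M)\neq 0$ for every $\fram$-primary $I$ (the last differential of a minimal free resolution annihilates the nonzero socle of $F_{\pd_R M}/IF_{\pd_R M}$, the case $I=\fram$ being classical), and together these give $\pd_R M<t$. Outside the regular case Tor‑rigidity fails, and integral‑closedness of $I$ is exactly what must replace it; indeed the statement is false for a general $\fram$-primary $I$.
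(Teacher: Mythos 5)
The paper does not actually prove this statement; it is quoted from \cite[Corollary 3.3]{CorsoHunekeKatzVasconcelos}, so the only thing to assess is whether your plan would constitute a proof. Your outer scaffolding is correct and matches the standard approach: the forward implication is free, the reduction to $t=1$ via syzygies is fine (the hypothesis that $M$ is finitely generated, which you add and the paper omits, is genuinely needed --- the claim fails for $M=\Frac(R)$ over a DVR), the identification $\Tor_1^R(R/I,\Omega)\cong (N\cap IF)/IN$ is right, the Nakayama endgame is right, and so is the fact that $IF$ is an integrally closed submodule of $F$.

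The problem is the step you yourself call the crux: deducing $N\subseteq IF$ from $N\cap IF=IN$. It is not merely left open --- the specific mechanism you sketch discards too much information to work. From $\frb v\subseteq IN$ you retain only that each coordinate $v_i$ lies in $(I:_R\frb)$ for some ideal $\frb\supsetneq I$, and that does not force $v_i\in\overline{I}$: take $R=k[[x,y]]$, $I=\fram^2$ (integrally closed), $\frb=I+(x)$, $v_i=y$. The determinant trick needs a relation of the form $c\,T\subseteq I\,T$ for a \emph{single} ring element $c$ acting on a fixed finitely generated faithful module $T$, not an element-by-element colon computation. The working argument --- which, ironically, you essentially write down in your ``sanity check'' paragraph without recognizing it as the main engine --- is: pick a socle element $c\in (I:_R\fram)\setminus I$ (it exists because $I$ is $\fram$-primary and proper); minimality gives $N\subseteq\fram F$, hence $cN\subseteq c\fram F\subseteq IF$ and also $cN\subseteq N$, so $cN\subseteq N\cap IF=IN$; Cayley--Hamilton then produces a monic $p(T)=T^k+a_1T^{k-1}+\cdots+a_k$ with $a_i\in I^i$ and $p(c)N=0$. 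If $N\neq 0$ and $N$ is faithful --- automatic when $R$ is a domain, since $N$ embeds in a free module, and this covers every application in the present paper --- then $p(c)=0$, so $c\in\overline{I}=I$, a contradiction; hence $N=0$ and $\Omega$ is free. For non-domains $\Ann_R N$ need not vanish and genuinely more work is required (this is what occupies Corso--Huneke--Katz--Vasconcelos); your instinct to bring in Rees valuations there is reasonable but is not carried out.
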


We specialize it in the following corollary that we will use in the next section.
\begin{corollary}
\label{cor.CorsoHunekeKatzVasconcelos}
Suppose that $(R, \fram)$ is a Noetherian local ring and that $I$ is an integrally closed $\fram$-primary ideal of finite projective dimension.  Then $R$ is regular.
\end{corollary}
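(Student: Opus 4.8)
The plan is to derive this directly from \autoref{thm.CorsoHunekeKatzVasconcelos} by feeding the hypothesis back into the theorem. First I would record that ``$I$ has finite projective dimension'' means $\pd_R I < \infty$, which via the short exact sequence $0 \to I \to R \to R/I \to 0$ is equivalent to $\pd_R R/I < \infty$; set $n := \pd_R R/I$. Since $R$ is Noetherian and $R/I$ is finitely generated, $R/I$ admits a finite projective (indeed free) resolution of length $n$, and therefore $\Tor^R_{n+1}(R/I, M) = 0$ for \emph{every} $R$-module $M$, with no hypothesis on $M$.

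Next I would invoke \autoref{thm.CorsoHunekeKatzVasconcelos} with $t = n+1$: because $I$ is integrally closed and $\fram$-primary, the vanishing $\Tor^R_{n+1}(R/I, M) = 0$ forces $\pd_R M < n+1$, i.e.\ $\pd_R M \le n$. Applying this to $M = k = R/\fram$ gives $\pd_R k \le n < \infty$, so $R$ has finite global dimension and is thus regular by the Auslander--Buchsbaum--Serre theorem. (Alternatively, one notes that $\pd_R M \le n$ for all finitely generated $M$ already says $R$ has finite global dimension.)

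I do not anticipate any real obstacle here: the entire content of the corollary is carried by \autoref{thm.CorsoHunekeKatzVasconcelos}, and the argument is just the observation that a single module of finite projective dimension kills all high $\Tor$ against it, which the converse direction of that theorem then upgrades to a uniform bound on projective dimension over the whole module category. The only genuinely needed verification is the elementary equivalence $\pd_R I < \infty \Leftrightarrow \pd_R R/I < \infty$.
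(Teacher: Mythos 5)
Your proposal is correct and follows essentially the same route as the paper: pass from $\pd_R I<\infty$ to $\pd_R R/I<\infty$, conclude $\Tor^R_t(R/I,k)=0$ for large $t$, and then apply \autoref{thm.CorsoHunekeKatzVasconcelos} with $M=k$ to get $\pd_R k<\infty$ and hence regularity. The only difference is that you spell out the short exact sequence $0\to I\to R\to R/I\to 0$ and the uniform vanishing of $\Tor$ explicitly, which the paper leaves implicit.
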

\begin{proof}
Since $I$ has finite projective dimension, we see that $\Tor^R_i(R/I,k) = 0$ for $i \gg 0$.  But now taking $M = k = R/\fram$ in the statement of \autoref{thm.CorsoHunekeKatzVasconcelos} we see that $k$ has finite projective dimension since $I$ is integrally closed and $\fram$-primary.  The result follows.
\end{proof}

\subsection{Multiplier ideals and multiplier submodules}

For references in this section, see \cite{LazarsfeldPositivity2,SchwedeTakagiRationalPairs,BlickleSchwedeTuckerTestAlterations}.

\begin{definition}[Multiplier submodules]
Suppose that $\pi : X \to \Spec R$ is a resolution of singularities.  Then the \emph{multiplier submodule} of $R$, denoted $\mJ(\omega_R)$ is just $\pi_* \omega_X \subseteq \omega_R$.  Here $\omega_R$ (respectively $\omega_X$) is the first nonzero cohomology of the dualizing complex.

We now generalize this a bit.  Suppose $R$ is a normal domain, $\Gamma \geq 0$ is a $\bQ$-Cartier divisor, and $\pi$ is a log resolution of $(X, \Gamma)$.  Then we define $\mJ(\omega_R, \Gamma) = \pi_* \O_X(\lceil K_X - \pi^* \Gamma \rceil)$.  If we choose $0 \neq f \in R$ and $t \in \bQ_{\geq 0}$, then we set $\mJ(\omega_R, \Gamma, f^t) = \mJ(\omega_R, \Gamma + t \Div(f))$.  Finally, if $\fra \subseteq R$ is an ideal and $\pi$ is a log resolution of $(R, \Gamma, \fra^t)$ with $\fra \cdot \O_X = \O_X(-G)$, then we define
\[
\mJ(\omega_R, \Gamma, \fra^t) = \pi_* \O_X(\lceil K_X - \pi^* \Gamma - t G \rceil) \subseteq \omega_R.
\]
All of this is independent of the choice of resolution.
\end{definition}

In the above, if $\Gamma$ is ever left out, it is treated as zero.

\begin{definition}[Multiplier ideals]
Suppose that $R$ is a normal domain, $\Delta \geq 0$ is a $\bQ$-divisor such that $K_R + \Delta$ is $\bQ$-Cartier, $\fra \subseteq R$ is an ideal and $t \in \bQ_{\geq 0}$, then we define the \emph{multiplier ideal}
\[
\mJ(R, \Delta, \fra^t) = \pi_* \O_X(\lceil K_X - \pi^*(K_R + \Delta) - tG\rceil)
\]
where $\pi : X \to \Spec R$ is a log resolution of $(R, \Delta, \fra^t)$ and $\fra \cdot \O_X  = \O_X(-G)$.
Again, this is independent of the choice of resolution.
\end{definition}

If $\Delta$ is left off, then it is treated as zero and if $\fra$ is left off, it is treated as $R$.

\section{The main result in characteristic zero}

We begin with the ``easy'' direction.

\begin{theorem}
Suppose that $(R, \fram)$ is a regular local ring essentially of finite type over a field of characteristic zero.  If $\pi : X \to \Spec R$ is a regular alteration, then $\pd_R \myR \pi_* \O_X = 0$.
\end{theorem}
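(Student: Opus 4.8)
The plan is to show that when $R$ is regular and $\pi : X \to \Spec R$ is a regular alteration in characteristic zero, the complex $\myR\pi_*\O_X$ is not just of finite projective dimension but is actually quasi-isomorphic to $R$ itself (in degree $0$), so that $\pd_R \myR\pi_*\O_X = 0$. The key input is the characteristic-zero fact that alterations from regular varieties behave like resolutions as far as the structure sheaf is concerned: one wants to know that $\myR\pi_*\O_X \simeq \O_{\Spec R}$.

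First I would reduce to the case where $\pi$ is (the restriction of) a resolution of singularities. Since $X$ is regular and $\pi$ is proper and generically finite of degree $d$, I would dominate $\pi$ by a resolution: choose a regular alteration $\rho : Z \to X$ that is generically finite, so that the composite $Z \to \Spec R$ is a resolution of singularities of the regular ring $R$, hence $\myR(\pi\circ\rho)_*\O_Z \simeq R$ by the standard theorem (a resolution of a regular — in fact rational — singularity has $\myR(\text{resolution})_*\O = \O$). The map $\O_X \to \myR\rho_*\O_Z$ splits: indeed on the generic fiber it is the inclusion of $K(X)$ into a finite field extension, which splits by the trace map $\frac{1}{d}\Tr$ after dividing by the degree — since we are in characteristic zero this is legitimate — and this trace splitting extends over all of $X$ because $X$ is normal (in fact regular) and the splitting is defined in codimension one. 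Thus $\O_X$ is a direct summand of $\myR\rho_*\O_Z$ in the derived category of coherent sheaves on $X$.

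Next I would push forward: $\myR\pi_*\O_X$ is then a direct summand of $\myR\pi_*\myR\rho_*\O_Z = \myR(\pi\circ\rho)_*\O_Z \simeq R$. A direct summand (in $D^b_{\coherent}(R)$) of the complex $R$ concentrated in degree zero is again a module concentrated in degree zero, corresponding to an idempotent endomorphism of $R$; since $R$ is a domain the only idempotents are $0$ and $1$, and because $\myR\pi_*\O_X$ has rank $1$ in degree $0$ (the generic fiber contributes $H^0 = K(X)$, a one-dimensional $K(R)$-vector space, and higher cohomology vanishes generically) the summand must be all of $R$. Hence $\myR\pi_*\O_X \simeq R$ and $\pd_R \myR\pi_*\O_X = 0$.

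The main obstacle is making the trace-splitting argument rigorous at the level of complexes rather than just sheaves: one needs $\O_X \to \myR\rho_*\O_Z$ (not merely $\O_X \to \rho_*\O_Z$) to be split in the derived category. The cleanest route is to observe that $\rho_*\O_Z = \O_X$ since $Z \to X$ is a proper birational (generically one-to-one can be arranged, or generically finite with $X$ normal forces $\rho_*\O_Z = \O_X$ only when $\rho$ is birational) — so I should instead take $\rho$ \emph{birational}, i.e. a genuine resolution of singularities of $X$ (which exists by Hironaka since we are in characteristic zero and $X$ is already regular, so we may even take $\rho = \id$!). In fact, since $X$ is \emph{already regular}, no resolution is needed: $Z = X$ works, $\pi\circ\rho = \pi$, and we are reduced to showing directly that a regular alteration $\pi : X \to \Spec R$ of a regular local ring has $\myR\pi_*\O_X \simeq R$. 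For this, factor $\pi$ through its Stein factorization or argue directly: $\pi_*\O_X$ is a finite reflexive (hence free, as $R$ is regular) $R$-module containing $R$ as a summand via the normalized trace, and the higher direct images $\myR^i\pi_*\O_X$ for $i > 0$ are torsion, but $\myR\pi_*\O_X$ being a Cohen–Macaulay complex over the Cohen–Macaulay ring $R$ with support everything forces these to vanish — alternatively, invoke \autoref{thm.BuchEisComplex}: $\pd_R\myR\pi_*\O_X$ is finite (each $\myR^i\pi_*\O_X$ is coherent, and one checks finiteness of projective dimension of the whole complex over the regular ring $R$ directly, since \emph{every} finitely generated module over a regular local ring has finite projective dimension, hence so does any bounded complex of such), so $\depth_R\myR\pi_*\O_X + \pd_R\myR\pi_*\O_X = \depth R = \dim R$; since $\myR\pi_*\O_X$ lives in degrees $[0, \dim X - \dim R]$ one gets $\depth \geq 0$ and $H^0 \neq 0$, and a closer analysis of amplitude (the complex is concentrated in degree $0$ precisely because $R$ is regular and the trace gives a splitting $R \to \myR\pi_*\O_X \to R$ whose failure to be an isomorphism is measured by torsion that cannot occur in a finite-projective-dimension CM complex over a regular ring) pins it down. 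Getting this last amplitude bound exactly right — ruling out the higher cohomology $\myR^i\pi_*\O_X$ for $i>0$ — is where I would expect to spend the most care; the expected mechanism is that over the regular (hence rational-singularity) base, a regular alteration's structure sheaf has vanishing higher direct images by the characteristic-zero vanishing theorems (essentially Grauert–Riemenschneider / Kempf-type vanishing applied after passing to a resolution dominating $X$).
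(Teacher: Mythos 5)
There is a genuine gap --- in fact the organizing claim of your argument is false. You set out to show $\myR \pi_* \O_X \simeq R$, but an alteration is only \emph{generically finite}, of some degree $d$ possibly greater than $1$, and then $\pi_*\O_X$ has generic rank $d$: for instance $X = \Spec R[\sqrt{f}]$ regular and finite of degree $2$ over $R$ gives $\myR\pi_*\O_X = \pi_*\O_X$ free of rank $2$, which has projective dimension $0$ but is not $R$. The statement to prove is only that the complex is quasi-isomorphic to a \emph{free module} in degree $0$. This misconception also sinks your proposed reduction: no $Z \to X$ can make the composite $Z \to \Spec R$ birational when $\deg \pi > 1$, so ``dominate $\pi$ by a resolution of $\Spec R$'' is not available, and the trace-splitting only exhibits $R$ as a summand of $\pi_*\O_X$, not an isomorphism. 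A second concrete error: ``$\pi_*\O_X$ is reflexive, hence free, as $R$ is regular'' fails for $\dim R \geq 3$ (reflexive $=$ second syzygy, and second syzygies over a regular local ring of dimension $\geq 3$ need not be free).

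Your final paragraph does assemble the correct ingredients --- finiteness of $\pd_R \myR\pi_*\O_X$ from regularity of $R$, and the Auslander--Buchsbaum formula of \autoref{thm.BuchEisComplex} --- but from ``the complex lives in nonnegative degrees'' you only get $\depth_R \myR\pi_*\O_X \geq 0$, hence $\pd_R \myR\pi_*\O_X \leq \dim R$, which is not the claim. The step you defer as ``where I would expect to spend the most care'' is exactly the content of the paper's proof: the Matlis-dual form of Grauert--Riemenschneider vanishing (valid for proper \emph{generically finite} maps from regular $X$, via $\myR\pi_*\omega_X^{\mydot} \simeq \pi_*\omega_X[\dim R]$ and local duality) gives $H^i_{\fram}(\myR\pi_*\O_X) = 0$ for all $i < \dim R$, i.e.\ $\depth_R \myR\pi_*\O_X \geq \dim R$, whence $\pd_R \myR\pi_*\O_X \leq 0$; the reverse inequality holds since $\myH^0 \neq 0$. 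Note also that the vanishing you gesture at ($R^i\pi_*\O_X = 0$ for $i>0$) would not by itself finish the argument, since one must still show $\pi_*\O_X$ is projective; the depth inequality delivers both at once.
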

\begin{proof}
Since $R$ is regular, the bounded complex  $\myR \pi_* \O_X$ has finite projective dimension. By \autoref{thm.BuchEisComplex}, taking $M = R$ and $P = \myR \pi_* \O_X$ we have that
\[
\pd_R \myR \pi_* \O_X + \depth_R(\myR \pi_* \O_X) = \dim R.
\]
By the Matlis-dual version of Grauert-Riemenschneider vanishing \cite{GRVanishing}, $H^i_{\fram}(\myR \pi_* \O_X) = 0$ for all $i < \dim R$ and hence $\depth_R(\myR \pi_* \O_X) \geq \dim R$. Thus $\pd_R \myR \pi_* \O_X\leq 0$. But clearly $\pd_R \myR \pi_* \O_X\geq 0$ since $\myH^0(\myR \pi_* \O_X)\neq 0$. The result follows.
\end{proof}

\begin{lemma}
\label{lem.FPDImpliesCM}
Suppose $(R, \fram)$ is a local domain essentially of finite type over a field of characteristic zero and that $\pi : X \to \Spec R$ is a resolution of singularities.  If $\pd_R \myR \pi_* \O_X < \infty$, then $R$ is Cohen-Macaulay.
\end{lemma}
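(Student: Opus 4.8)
The plan is to run the Foxby--Iyengar formula (\autoref{thm.BuchEisComplex}) against the Matlis-dual form of Grauert--Riemenschneider vanishing, in the same spirit as the proof of the preceding theorem. Write $P = \myR\pi_*\O_X$. Since $\pi$ is proper and $R$ is Noetherian, $P$ has finitely generated cohomology, and since $\pi$ is birational with $X$ integral, the generic stalk of $\pi_*\O_X$ is $\Frac R$, so $H^0(P) = \pi_*\O_X \neq 0$. Hence $\mathrm{H}(P)$ is nonzero and finitely generated, and the hypothesis $\pd_R P < \infty$ is exactly what is required to apply \autoref{thm.BuchEisComplex} with $M = R$:
\[
\depth_R R = \depth_R \myR\pi_*\O_X + \pd_R \myR\pi_*\O_X .
\]

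Next I would bound $\depth_R \myR\pi_*\O_X$ from below by $\dim R$. By Grothendieck duality for the proper morphism $\pi$, $\myR\Hom_R(\myR\pi_*\O_X, \omega_R^{\bullet}) \simeq \myR\pi_*\omega_X^{\bullet}$, where $\omega_R^{\bullet}$ and $\omega_X^{\bullet}$ denote the (normalized) dualizing complexes. Because $X$ is regular of dimension $\dim R$, $\omega_X^{\bullet} \simeq \omega_X[\dim R]$, and Grauert--Riemenschneider vanishing \cite{GRVanishing} gives $R^i\pi_*\omega_X = 0$ for $i > 0$; thus $\myR\Hom_R(\myR\pi_*\O_X, \omega_R^{\bullet})$ is concentrated in cohomological degree $-\dim R$. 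Local duality then yields $H^i_{\fram}(\myR\pi_*\O_X) = 0$ for every $i \neq \dim R$, and in particular $\depth_R \myR\pi_*\O_X \geq \dim R$. This is precisely the vanishing already invoked in the proof of the preceding theorem.

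Finally, since $\pd_R \myR\pi_*\O_X \geq 0$ (because $H^0(\myR\pi_*\O_X) \neq 0$), the displayed equality forces $\depth_R R \geq \dim R$, hence $\depth_R R = \dim R$, so $R$ is Cohen--Macaulay. I do not anticipate a genuine obstacle: all the ingredients --- the Foxby--Iyengar formula, Grothendieck and local duality, and Grauert--Riemenschneider vanishing in characteristic zero --- are standard, and the finiteness hypothesis on $\pd_R \myR\pi_*\O_X$ serves only to license the application of \autoref{thm.BuchEisComplex}.
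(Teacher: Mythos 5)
Your argument is correct and coincides with the paper's ``Alternate proof'' of this lemma: apply \autoref{thm.BuchEisComplex} with $M=R$, use the Matlis-dual (local duality) form of Grauert--Riemenschneider vanishing to get $\depth_R \myR\pi_*\O_X \geq \dim R$, and conclude from $\pd_R \myR\pi_*\O_X \geq 0$. (The paper's first proof instead dualizes to get $\pi_*\omega_X$ as a finitely generated module of finite injective dimension and invokes Bass' question, but your route is exactly the second proof given there.)
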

\begin{proof}
Since $\pd_R \myR \pi_* \O_X < \infty$, we see that the injective dimension of the Grothendieck dual, $\myR \pi_* \omega_X^{\mydot} \cong \pi_* \omega_X[\dim R]$ (by Grauert-Riemenschneider vanishing), is finite.  But then $\pi_* \omega_X$ is a finitely generated $R$-module of finite injective dimension and so $R$ is Cohen-Macaulay by Bass' question \cite{PeskineSzpiroDimensionProjective,HochsterTopicsInTheHomologicalTheory,RobertsIntersectionTheorems}.
\end{proof}
\begin{proof}[Alternate proof]
By the Matlis-dual version of the Grauert-Riemenschneider vanishing, we see that $H^i_{\fram}(\myR \pi_* \O_X) = 0$ for all $i < \dim R$.  Hence $\depth \myR \pi_* \O_X = \dim R$. Note also that $\pd_R \myR \pi_* \O_X \geq 0$ since $\myH^0(\myR \pi_* \O_X)\neq 0$. Thus we have
\[
\depth R = \depth(\myR \pi_* \O_X) + \pd_R(\myR \pi_* \O_X) \geq \dim R
\]
by \autoref{thm.BuchEisComplex} and hence $R$ is Cohen-Macaulay.
\end{proof}

We are ready to prove the following characterization of rational singularities, this result is an important step towards proving the main theorem and is interesting in its own right.

\begin{theorem}
\label{thm.RationalSing}
Suppose $(R, \fram)$ is a local domain essentially of finite type over a field of characteristic zero.  Let $\pi : X \to \Spec R$ be a resolution of singularities.  Then $R$ has rational singularities if and only if $\pd_R \myR \pi_* \O_X<\infty$.
\end{theorem}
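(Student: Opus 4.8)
The plan is to prove the two implications separately. The forward direction is immediate: if $R$ has rational singularities then $R$ is normal, so $\pi_*\O_X=R$, and $R^i\pi_*\O_X=0$ for $i>0$; hence $\myR\pi_*\O_X\qis R$ in the derived category and $\pd_R\myR\pi_*\O_X=0<\infty$.

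For the converse, assume $\pd_R\myR\pi_*\O_X<\infty$. By \autoref{lem.FPDImpliesCM} the ring $R$ is Cohen--Macaulay, so its normalized dualizing complex is $\omega_R[\dim R]$ (in fact, combined with the depth computation $\depth_R\myR\pi_*\O_X=\dim R$ from the alternate proof of that lemma, \autoref{thm.BuchEisComplex} already forces $\pd_R\myR\pi_*\O_X=0$, but I will only need finiteness). Grothendieck duality along $\pi$ together with Grauert--Riemenschneider vanishing gives $\myR\pi_*\O_X\qis\myR\Hom_R(\pi_*\omega_X,\omega_R)$, where $\pi_*\omega_X=\mJ(\omega_R)$ is the multiplier submodule; since dualizing into $\omega_R[\dim R]$ interchanges finite projective dimension with finite injective dimension, our hypothesis is equivalent to $\InjDim_R\pi_*\omega_X<\infty$. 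I also record that $R$ has rational singularities if and only if $\myR\pi_*\O_X\qis R$, equivalently $\pi_*\omega_X=\omega_R$: indeed $\myR\pi_*\O_X\qis R$ forces $\pi_*\O_X=\myH^0(\myR\pi_*\O_X)=R$, hence normality, and $R^i\pi_*\O_X=0$ for $i>0$, while conversely these two conditions give $\myR\pi_*\O_X\qis R$. So it remains to prove $\pi_*\omega_X=\omega_R$, and I may assume $Q:=\omega_R/\pi_*\omega_X\neq 0$ and derive a contradiction.

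I would argue by Noetherian induction on $\dim R$; the case $\dim R=0$ is trivial. For $\dim R\geq 1$ the hypothesis localizes: for every prime $\bp\neq\fram$ the base change $X_\bp:=X\times_{\Spec R}\Spec R_\bp\to\Spec R_\bp$ is again a resolution with $\pd_{R_\bp}\myR(\pi_\bp)_*\O_{X_\bp}\leq\pd_R\myR\pi_*\O_X<\infty$, and $\dim R_\bp<\dim R$, so by induction $R_\bp$ has rational singularities and $(\pi_*\omega_X)_\bp=\omega_{R_\bp}$. Hence $Q$ is supported only at $\fram$, i.e.\ $Q$ has finite length. Now set $I:=\Hom_R(\omega_R,\pi_*\omega_X)$, which under $\Hom_R(\omega_R,\omega_R)=R$ ($R$ being Cohen--Macaulay) is the ideal $\{r\in R:r\omega_R\subseteq\pi_*\omega_X\}=\Ann_R Q$; since $0\neq Q$ has finite length, $I$ is $\fram$-primary. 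Because $\pi_*\omega_X=\pi_*\O_X(K_X)$ is an integrally closed submodule of $\omega_R$ (a standard property of multiplier submodules, being an intersection of valuation ideals), the colon ideal $I$ is integrally closed. Finally, both $\omega_R$ (automatically) and $\pi_*\omega_X$ (by hypothesis) have finite injective dimension over the Cohen--Macaulay ring $R$, so applying $\Hom_R(\omega_R,-)$, which sends modules of finite injective dimension to modules of finite projective dimension (Foxby's Bass--Auslander class equivalence; one checks $\Hom_R(\omega_R,\pi_*\omega_X)\qis\myR\Hom_R(\myR\pi_*\O_X,R)$, whose finite projective dimension is inherited from that of $\myR\pi_*\O_X$), yields $\pd_R I<\infty$. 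Now \autoref{cor.CorsoHunekeKatzVasconcelos} applies to $I$ and shows $R$ is regular; but a regular ring has rational singularities, so $\pi_*\omega_X=\omega_R$, i.e.\ $Q=0$, a contradiction. Therefore $Q=0$ and $R$ has rational singularities, completing the induction.

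The step I expect to be the main obstacle is the heart of the converse: running the Noetherian induction to reduce to $Q$ of finite length, and then verifying that $I=\Hom_R(\omega_R,\pi_*\omega_X)$ is simultaneously $\fram$-primary, integrally closed, and of finite projective dimension, so that \autoref{cor.CorsoHunekeKatzVasconcelos} can be invoked. Finiteness of the projective dimension is the subtlest point, and it is precisely there that the interplay between $\myR\pi_*\O_X$, its Grothendieck dual $\pi_*\omega_X$, and homological duality over a Cohen--Macaulay ring is genuinely used; the remaining ingredients (Cohen--Macaulayness from \autoref{lem.FPDImpliesCM}, the duality identification of $\myR\pi_*\O_X$, and the compatibility of $\pi_*\omega_X$ and the trace map with localization) are routine.
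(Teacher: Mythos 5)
Your proposal is correct and follows essentially the same route as the paper: Cohen--Macaulayness from \autoref{lem.FPDImpliesCM}, reduction to the case where $\omega_R/\pi_*\omega_X$ has finite length, and then showing $\Hom_R(\omega_R,\pi_*\omega_X)$ is an $\fram$-primary, integrally closed ideal of finite projective dimension so that \autoref{cor.CorsoHunekeKatzVasconcelos} applies. The only differences are cosmetic: you reduce to the finite-length case by induction on dimension where the paper localizes at a minimal prime of the support, you cite Foxby's Bass-class equivalence where the paper cites Sharp's theorem, and you justify integral closedness via valuative conditions where the paper identifies $\Hom_R(\omega_R,\pi_*\omega_X)$ with the pushforward of an invertible sheaf on $X$ --- all equivalent in substance.
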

\begin{proof}
If $R$ has rational singularities then obviously $\pd_R \myR \pi_* \O_X<\infty$ since $\myR \pi_* \O_X\cong R$. We now assume that $\pd_R \myR \pi_* \O_X<\infty$. We already see that $R$ is Cohen-Macaulay by \autoref{lem.FPDImpliesCM}. Hence, it is sufficient to show that $\pi_* \omega_X = \omega_R$.

So we suppose $\pi_* \omega_X \neq \omega_R$. By choosing a minimal prime $P$ of $\Supp (\omega_R/\pi_*\omega_X)$ and replacing $R$ by $R_P$, we may assume $R$ has rational singularities on the punctured spectrum (i.e., $\omega_R/\pi_*\omega_X$ has finite length). Since $\pi_* \omega_X$ has finite injective dimension (see the proof of \autoref{lem.FPDImpliesCM}), by \cite[Theorem 2.9]{SharpFinitelyGeneratedmodulesFiniteInjectiveDimension}, $\Hom_R(\omega_R, \pi_* \omega_X)$ has finite projective dimension.  But
\[
\Hom_R(\omega_R, \pi_* \omega_X) = \pi_* \sHom_{\O_X}(\pi^* \omega_R, \omega_X).
\]
Now $\sHom_{\O_X}(\pi^* \omega_R, \omega_X)$ is a rank $1$ reflexive sheaf on $X$. Since $X$ is regular, $\sHom_{\O_X}(\pi^* \omega_R, \omega_X)$ is locally free and so its pushforward, which is isomorphic to $$\Hom_R(\omega_R, \pi_* \omega_X) \subseteq \Hom_R(\omega_R, \omega_R) \subseteq R,$$ is an integrally closed ideal. Since our assumption is $0\neq \omega_R/\pi_*\omega_X$ has finite length, it follows that $\Hom_R(\omega_R, \pi_* \omega_X)\neq R$ is an $\m$-primary integrally closed ideal. But then by \autoref{cor.CorsoHunekeKatzVasconcelos}, $\pd_R\Hom_R(\omega_R, \pi_* \omega_X)<\infty$ already implies $R$ is regular and thus $\pi_* \omega_X =\omega_R$ which is a contradiction.
\end{proof}
\begin{remark}
Bhargav Bhatt communicated to us an alternate proof of \autoref{thm.RationalSing}, which we now sketch.
Since $\myR \pi_* \O_X$ is a perfect complex, there exists a trace map
\[
\myR \Hom_R(\myR \pi_* \O_X, \myR\pi_* \O_X) \to R.
\]
On the other hand, we have the map $\myR \pi_* \O_X \to \Hom_R(\myR \pi_* \O_X, \myR\pi_* \O_X)$ coming from $\O_X$'s left multiplication action on itself.  We have the composition
\[
R \to \myR \pi_* \O_X \to \myR \Hom_R(\myR \pi_* \O_X, \myR\pi_* \O_X) \to R
\]
which is an isomorphism generically (on the open set where $\pi$ is an isomorphism), hence an isomorphism.  But then $R$ has rational singularities by \cite{KovacsRat} (note that that result still utilizes Grauert-Riemenschneider vanishing).
\end{remark}

\subsection{An aside on multiplier ideals}

We assume the following is essentially well known to experts, but we do not know a reference.

\begin{proposition}
\label{prop.MultiplierIdeal}
Suppose $(R,\fram)$ is a normal local domain essentially of finite type over a field of characteristic zero. Suppose $0\neq f\in R$ such that $\Div_R(f)$ is reduced. Fix $N\geq 0$ and let $S=R[f^{1\over N+1}]$ be the normal cyclic cover. Then $\mJ(\omega_S)$ has an $R$-summand isomorphic to $\mJ(\omega_R, f^{N \over N+1})$.
\end{proposition}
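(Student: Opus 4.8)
To prove \autoref{prop.MultiplierIdeal} the plan is to realize $\mJ(\omega_S)$ on a resolution dominating both $\Spec S$ and a log resolution of $\Spec R$: resolve $\Spec S$ by the normalized cyclic cover of a log resolution $\pi : X \to \Spec R$ of the pair $(R, \Div_R(f))$, observe that this cover has only quotient singularities, and read off the claimed $R$-summand from the $\bZ/(N+1)$-grading via Grothendieck duality. Concretely, fix such a $\pi$ and write $f \O_X = \O_X(-F)$, so $F = \widetilde{D} + \sum_j a_j E_j$ with $\widetilde{D}$ the reduced strict transform of $\Div_R(f)$ and the $E_j$ the exceptional prime divisors; then $\pi^* \Div_R(f) = F$ and, for a canonical divisor $K_X$ chosen with $\pi_* \O_X(K_X) = \pi_* \omega_X$,
\[
\mJ(\omega_R, f^{N \over N+1}) = \pi_* \O_X\bigl(\lceil K_X - \tfrac{N}{N+1} F \rceil\bigr) = \pi_* \O_X\bigl(K_X - \lfloor \tfrac{N}{N+1} F \rfloor\bigr).
\]
Let $\rho : Y \to X$ be the normalization of $X$ in $\Frac(S) = \Frac(R)\bigl[f^{1/(N+1)}\bigr]$, i.e.\ the normalized degree-$(N+1)$ cyclic cover, let $\psi : Y \to \Spec S$ be the induced morphism, and let $\nu : \Spec S \to \Spec R$ be the cyclic cover, so that $\pi \circ \rho = \nu \circ \psi$. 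Since $\pi$ is birational and $\nu$ is finite, $\psi$ is proper and birational.

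I would then argue that $Y$ has only quotient singularities. \'Etale-locally on $X$ the divisor $F$ is cut out by a monomial $x_1^{b_1}\cdots x_r^{b_r}$ in part of a regular system of parameters, and since $\Char k = 0$ every unit acquires an $(N+1)$-st root \'etale-locally; hence, \'etale-locally, $Y$ is the normalization of $\O[t]/(t^{N+1} - x_1^{b_1}\cdots x_r^{b_r})$, an affine toric variety. Thus $Y$ is Cohen--Macaulay with rational singularities, so for a resolution $g : Y' \to Y$ one has $g_* \omega_{Y'} = \omega_Y$; since $\psi \circ g : Y' \to \Spec S$ is a resolution of $\Spec S$, this gives $\mJ(\omega_S) = (\psi \circ g)_* \omega_{Y'} = \psi_* \omega_Y$, and hence, regarded as an $R$-module, $\mJ(\omega_S) = \nu_* \psi_* \omega_Y = \pi_* \rho_* \omega_Y$.

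It then remains to compute $\rho_* \omega_Y$ with its $\bZ/(N+1)$-grading. Directly from the cyclic cover construction one has
\[
\rho_* \O_Y = \bigoplus_{i=0}^{N} \O_X\bigl(\lfloor \tfrac{i}{N+1} F \rfloor\bigr) \cdot f^{i/(N+1)},
\]
the summands being the graded pieces: the right-hand side is a finite $\O_X$-algebra, integral over $\O_X$ (each displayed generator has its $(N+1)$-st power in $\O_X$), normal --- a check that reduces to codimension one, where the normalization of a discrete valuation ring in this extension is again a product of discrete valuation rings --- and with fraction field $\Frac(S)$, so it is the integral closure $\rho_* \O_Y$. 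As $Y$ is Cohen--Macaulay of dimension $\dim X$ and $X$ is smooth, Grothendieck duality for the finite morphism $\rho$ yields $\rho_* \omega_Y = \sHom_{\O_X}(\rho_* \O_Y, \omega_X)$, and since dualizing reverses the grading,
\[
\rho_* \omega_Y = \bigoplus_{i=0}^{N} \O_X\bigl(K_X - \lfloor \tfrac{i}{N+1} F \rfloor\bigr) \cdot f^{-i/(N+1)}.
\]
Applying $\pi_*$, and using that $\pi_*$ of an effective exceptional twist of $\O_X$ recovers $R$ (since $R$ is $S_2$ and the exceptional locus has codimension $\geq 2$), we obtain a decomposition of $\mJ(\omega_S)$ into $R$-module direct summands
\[
\mJ(\omega_S) = \bigoplus_{i=0}^{N} \pi_* \O_X\bigl(K_X - \lfloor \tfrac{i}{N+1} F \rfloor\bigr) \cdot f^{-i/(N+1)},
\]
whose $i = N$ term is $\pi_* \O_X\bigl(K_X - \lfloor \tfrac{N}{N+1} F \rfloor\bigr) \cong \mJ(\omega_R, f^{N \over N+1})$, as claimed.

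The step I expect to be the main obstacle is the careful justification of the cyclic cover formula for $\rho_* \O_Y$ and of the rationality of $Y$ in the generality needed --- especially over the components where $F$ is non-reduced (the exceptional $E_j$, where $a_j > 1$ and $\gcd(a_j, N+1) > 1$ can occur, so that the naive local model $\O[t]/(t^{N+1} - x_1^{b_1}\cdots x_r^{b_r})$ may be reducible even though its normalization is the expected normal, and in fact toric, algebra). The remaining ingredients --- rewriting $\mJ(\omega_R, f^{N/(N+1)})$ as a round-down on $X$, the compatibility of Grothendieck duality with the grading, and the fact that graded pieces are $R$-module summands --- should be routine.
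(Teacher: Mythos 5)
Your argument is correct, but it takes a genuinely different route from the paper's. The paper never leaves $\Spec R$ and $\Spec S$: choosing $-K_R$ effective, it invokes the transformation rule for multiplier ideals under finite morphisms together with the splitting (up to scalars) of the trace map (\cite[Theorem 9.5.42]{LazarsfeldPositivity2}, \cite[Theorem 8.1]{BlickleSchwedeTuckerTestAlterations}), which exhibits $\mJ(R, -K_R + \tfrac{N}{N+1}\Div_R f) = \mJ(\omega_R, f^{N/(N+1)})$ as a summand of $\mJ(S, -\rho^*K_R - \Ram_{S/R} + \tfrac{N}{N+1}\Div_S f)$; the reducedness of $\Div_R(f)$ then gives $\Ram_{S/R} = \tfrac{N}{N+1}\Div_S f$, so the latter object collapses to $\mJ(S,-\rho^*K_R) = \mJ(\omega_S, f^{N/(N+1)}) \cong \mJ(\omega_S)$. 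You instead reprove the special case of that machinery needed here by hand: passing to the normalized cyclic cover $Y$ of a log resolution $X$, checking $Y$ is toric (hence Cohen--Macaulay with rational singularities) so that $\psi_*\omega_Y$ computes $\mJ(\omega_S)$, and reading off the eigensheaf decomposition of $\rho_*\omega_Y$ from finite duality. What your route buys is self-containedness and explicitness --- you produce the entire $\mu_{N+1}$-eigendecomposition of $\mJ(\omega_S)$ into $R$-summands $\pi_*\O_X\bigl(K_X - \lfloor\tfrac{i}{N+1}F\rfloor\bigr)$, of which the paper's trace-map splitting abstractly extracts only the $i=N$ piece --- at the cost of the verifications you flag (the normalized cyclic cover formula, rationality of $Y$, equivariance of duality), which are exactly the standard inputs the citation packages up; your treatment of them is sound. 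Note that both proofs use the reducedness of $\Div_R(f)$ at the analogous spot: in the paper it forces $\Ram_{S/R} = \tfrac{N}{N+1}\Div_S f$, while in yours it guarantees that $t^{N+1}-f$ is irreducible and $S = R[t]/(t^{N+1}-f)$ is already normal, so that $\psi : Y \to \Spec S$ is birational and computes $\mJ(\omega_S)$.
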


\begin{proof}
Since $\Div_R(f)$ is reduced, $S$ is regular in codimension $1$ and hence $S$ is normal. Choose $-K_R$ effective.
By \cite[Theorem 9.5.42]{LazarsfeldPositivity2} (see also \cite[Theorem 8.1]{BlickleSchwedeTuckerTestAlterations}) we see that
\[
\mJ(R, -K_R + {N \over N+1} \Div_R f) = R \cap \mJ(S, -\Ram_{S/R}  - \rho^* K_R + {N \over N+1} \Div_S f ).
\]
Again since $\Div_R f$ is reduced, we see that $\Ram_{S/R} = {N \over N+1} \Div_S f$ and hence
\[
\mJ(R, -K_R + {N \over N+1} \Div_R f) \subseteq \mJ(S, -\rho^* K_R).
\]
On the other hand, by \cite[Theorem 8.1]{BlickleSchwedeTuckerTestAlterations}, we have a splitting (up to scalars)
\[
\begin{array}{rl}
& \Tr(\mJ(S, -\rho^* K_R)) \\
= & \Tr(\mJ(S, -\rho^* K_R - \Ram_{S/R} + {N \over N+1} \Div_S f )) \\
= & \mJ(R, -K_R + {N \over N+1} \Div_R f)\\
 = & \mJ(\omega_R, f^{N \over N+1}).
\end{array}
\]
But we have
\[
\begin{array}{rl}
& \mJ(S, -\rho^* K_R)\\
 = & \mJ(S, -\rho^* K_R - \Ram_{S/R} + {N \over N+1} \Div_S f ) \\
 = & \mJ(S, -K_S + {N \over N+1} \Div_S f ) \\
 = & \mJ(\omega_S, f^{N \over N+1}).
 \end{array}
\]
We have just shown that $\mJ(\omega_S, f^{N \over N+1})$ has an $R$-summand isomorphic to $\mJ(\omega_R, f^{N \over N+1})$.  But even as an $S$-module $\mJ(\omega_S, f^{N \over N+1}) = f^{N \over N+1} \mJ(\omega_S) \cong \mJ(\omega_S)$, and hence $\mJ(\omega_S)$ has an $R$-summand isomorphic to $\mJ(\omega_R, f^{N \over N+1})$.
\end{proof}

\subsection{Proof of Main Theorem in characteristic zero}

We now complete the proof of our main result in characteristic zero.

\begin{theorem}
Suppose $(R, \fram)$ is a local domain essentially of finite type over a field of characteristic zero.  Suppose that for every regular alteration $\pi : X \to \Spec R$, $\pd_R \myR \pi_* \O_X<\infty$.  Then $R$ is regular.
\end{theorem}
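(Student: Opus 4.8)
The plan is to reduce to the case where $R$ has rational singularities and then contradict non-regularity by producing an $\fram$-primary integrally closed ideal of finite projective dimension, to which \autoref{cor.CorsoHunekeKatzVasconcelos} applies. Throughout, the new input over the positive-characteristic case is that cyclic covers play the role of Frobenius, via \autoref{prop.MultiplierIdeal}.

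First I would run the standard reductions. A resolution of singularities is in particular a regular alteration, so \autoref{thm.RationalSing} gives that $R$ has rational singularities; hence $R$ is a normal Cohen--Macaulay local domain. If $\dim R\le 1$ then $R$ is already regular and we are done, so assume $\dim R\ge 2$ and, for contradiction, that $R$ is not regular. The hypothesis in (b) descends to localizations $R_P$: a regular alteration of $\Spec R_P$ spreads out to a proper generically finite morphism over an open neighborhood of $P$, extends to a proper model over $\Spec R$, and is then resolved by a resolution that is an isomorphism over the regular locus; the resulting finiteness of $\pd$ localizes back. Replacing $R$ by $R_P$ for $P$ a minimal prime of the (nonempty) non-regular locus, we may therefore also assume $R$ is regular on the punctured spectrum $\Spec R\setminus\{\fram\}$.

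Next, fix $k\gg 0$ and choose a general $f\in\fram^k$; by Bertini (here characteristic zero is used) $\Div_R(f)$ is reduced and restricts to a regular divisor on $\Spec R\setminus\{\fram\}$. Fix $N\gg 0$ and form the normal cyclic cover $S=R[f^{1/(N+1)}]$; by the proof of \autoref{prop.MultiplierIdeal} it is a normal domain, it is Cohen--Macaulay (a hypersurface over $R[T]$), and module-finite over $R$ via $\nu\colon\Spec S\to\Spec R$. Pick a resolution $g\colon Y\to\Spec S$, so $\pi'\colon Y\xrightarrow{g}\Spec S\xrightarrow{\nu}\Spec R$ is a regular alteration and by hypothesis $\pd_R\myR\pi'_*\O_Y<\infty$. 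Hence its Grothendieck dual $\myR\pi'_*\omega_Y^{\mydot}$ has finite injective dimension over $R$; but by Grauert--Riemenschneider vanishing $\myR g_*\omega_Y=g_*\omega_Y=\mJ(\omega_S)$ and $\nu$ is finite, so $\myR\pi'_*\omega_Y^{\mydot}\cong\nu_*\big(\mJ(\omega_S)\big)[\dim R]$, and therefore $\nu_*\mJ(\omega_S)$ is a finitely generated $R$-module of finite injective dimension. Now \autoref{prop.MultiplierIdeal} provides an $R$-module direct summand of $\mJ(\omega_S)$ isomorphic to $\mJ(\omega_R,f^{N\over N+1})$; a direct summand of a finite-injective-dimension module has finite injective dimension, so, exactly as in the proof of \autoref{thm.RationalSing} (using that $R$ is Cohen--Macaulay with canonical module $\omega_R$ and \cite[Theorem 2.9]{SharpFinitelyGeneratedmodulesFiniteInjectiveDimension}), the ideal $J:=\Hom_R\!\big(\omega_R,\mJ(\omega_R,f^{N\over N+1})\big)\subseteq\Hom_R(\omega_R,\omega_R)=R$ has finite projective dimension.

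It then remains to check that, for $k$ and then $N$ large, $J$ is a proper $\fram$-primary integrally closed ideal, whereupon \autoref{cor.CorsoHunekeKatzVasconcelos} forces $R$ regular, a contradiction. Integral closedness follows as in \autoref{thm.RationalSing}: on a log resolution $\mu\colon X\to\Spec R$ of $(R,f)$ one has $\mJ(\omega_R,f^{N\over N+1})=\mu_*\O_X(\lceil K_X-{N\over N+1}\mu^*\Div_R(f)\rceil)$, so $J=\mu_*\sHom_{\O_X}(\mu^*\omega_R,\mathcal L)$ with $\mathcal L$ invertible, hence $J$ is the pushforward of an invertible sheaf landing in $R$ and so is integrally closed. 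That $J$ is $\fram$-primary (or $R$) is because for $Q\ne\fram$ the ring $R_Q$ is regular and $\Div_R(f)$ is smooth there, so the pair is klt and $\mJ(\omega_R,f^{N\over N+1})_Q=\omega_{R_Q}$, whence $J_Q=R_Q$. Finally $J\ne R$: since $f\in\fram^k$ with $k$ large, the multiplier submodule $\mJ(\omega_R,f^{N\over N+1})$ is forced to be properly contained in $\omega_R$ — concretely, a divisor $E$ over $\fram$ on $X$ has $\ord_E(f)\ge k$, while $\mu_*\O_X(K_X-mE)$ becomes a proper submodule of $\omega_R=\mu_*\O_X(K_X)$ once $m$ exceeds a bound depending only on $R$, so for $k$ (and then $N$) large the round-down $\lfloor{N\over N+1}\mu^*\Div_R(f)\rfloor$ is large enough along $E$ to force the drop. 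I expect this last point — arranging the multiplier submodule to actually drop while keeping $V(J)=\{\fram\}$ — to be the main obstacle, together with making the localization step in the second paragraph precise; everything else is bookkeeping with Grothendieck duality, Grauert--Riemenschneider vanishing, and the cited results.
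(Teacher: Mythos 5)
Your proposal follows the same architecture as the paper's proof --- reduce to rational singularities via \autoref{thm.RationalSing}, use the cyclic cover $S=R[f^{1/(N+1)}]$ and \autoref{prop.MultiplierIdeal} to realize a multiplier submodule of $R$ as a summand of $\mJ(\omega_S)=\pi_*\omega_X$ for a regular alteration, apply Sharp's theorem to get an integrally closed ideal of finite projective dimension, and conclude by \autoref{cor.CorsoHunekeKatzVasconcelos} --- but you diverge in how you arrange for that ideal to be proper and $\fram$-primary, and the two obstacles you flag at the end are exactly the price of that divergence. The paper first picks $N\gg 0$ with $\mJ(\omega_R,\fram^N)\neq\omega_R$ (automatic once $\dim R\geq 1$), and then takes $f$ \emph{general in $\fram^{N+1}$} so that $\mJ(\omega_R,f^{N/(N+1)})=\mJ(\omega_R,\fram^N)$ by the general-element property of multiplier ideals \cite[Proposition 9.2.28]{LazarsfeldPositivity2}. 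This single device makes both of your delicate steps unnecessary: properness is built in by the choice of $N$, and $\fram$-primariness is automatic because $\fram^N$ is the unit ideal off the closed point and rational singularities localize, so $\mJ(\omega_R,\fram^N)_Q=\mJ(\omega_{R_Q})=\omega_{R_Q}$ for $Q\neq\fram$ with no need for $R$ to be regular or $\Div_R(f)$ to be smooth on the punctured spectrum. Consequently the paper never localizes at a minimal prime of the non-regular locus, and never needs the valuation-theoretic argument for the drop. Your two flagged steps are fillable --- the spreading-out/compactification/resolution argument does show the hypothesis passes to $R_P$, and a fixed divisorial valuation centered at $\fram$ does force $\mJ(\omega_R,f^{N/(N+1)})\subsetneq\omega_R$ for $f\in\fram^k$ with $k\gg 0$ --- but note that in your route ``$\Div_R(f)$ reduced'' is not enough away from the origin (a reduced but singular divisor with coefficient $N/(N+1)$ close to $1$ need not be klt); you correctly ask for regularity of the divisor on the punctured spectrum, which is precisely why you cannot avoid the localization step. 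If you adopt the paper's choice of $f$, the whole last paragraph of your argument collapses to two lines.
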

\begin{proof}
By \autoref{thm.RationalSing}, we already know that $R$ has rational singularities.  Choose $N >0$ so that $\mJ(\omega_R, \fram^N)\neq \omega_R$.  Then choose a general $f \in \fram^{N+1}$ and by \cite[Proposotion 9.2.28]{LazarsfeldPositivity2} we know that $\mJ(R, -K_R + {N \over N+1} \Div_R f ) = \mJ(R, -K_R, \fram^N) = \mJ(\omega_R, \fram^N)$.

Consider the normal cyclic cover $S = R[f^{1\over N+1}]$. Since $f$ is general, $\Div_R(f)$ is reduced and by \autoref{prop.MultiplierIdeal}, we know that $\mJ(\omega_S)$ has an $R$-summand isomorphic to $\mJ(\omega_R, f^{N \over N+1})$.

Next consider a resolution of singularities $\pi$: $X\to \Spec S$, then the composition $X\to \Spec S\to \Spec R$ is a regular alteration. Moreover, $\pi_*\omega_X=\mJ(\omega_S)$ has finite injective dimension over $R$ (because $\pi_*\omega_X[\dim R]$ is the Grothendieck dual of $\myR\pi_*\O_X$), so does its direct summand $\mJ(\omega_R, f^{N \over N+1})$. Therefore by \cite[Theorem 2.9]{SharpFinitelyGeneratedmodulesFiniteInjectiveDimension}, $$\Hom_R(\omega_R, \mJ(\omega_R, f^{N \over N+1})) \subseteq \Hom_R(\omega_R, \omega_R) \cong R$$ has finite projective dimension.  Since $\mJ(\omega_R, f^{N \over N+1})=\mJ(\omega_R,\m^N)$ agrees with $\omega_R$ except at the origin (where it \emph{does not} agree).  Thus
$\Hom_R(\omega_R, \mJ(\omega_R, f^{N \over N+1}))$ lacks the identity map $\omega_R \to \omega_R$ and hence it is identified with an $\fram$-primary ideal of $R$.

Next we show that $\Hom_R(\omega_R, \mJ(\omega_R, f^{N \over N+1})) \subseteq R$ is an integrally closed ideal.  Take a log resolution of singularities $\pi : X \to \Spec R$ of $(R, \Div_R(f))$. By definition we have $\mJ(\omega_R, f^{N \over N+1}) = \pi_* \O_X(\lceil K_X - {N \over N+1} \Div_X(f)\rceil )$.  Thus
\[
\begin{array}{rl}
& \Hom_R(\omega_R, \mJ(\omega_R, f^{N \over N+1}))  \\
= &\Hom_R(\omega_R, \pi_* \O_X(\lceil K_X - {N \over N+1} \Div_X(f)\rceil )) \\
= & \pi_* \sHom_{\O_X}(\pi^* \omega_R, \O_X(\lceil K_X - {N \over N+1} \Div_X(f)\rceil )).
\end{array}\]
As in the proof of \autoref{thm.RationalSing}, since $\sL := \sHom_{\O_X}(\pi^* \omega_R, \O_X(\lceil K_X - {N \over N+1} \Div_X(f)\rceil ))$ is a rank 1 reflexive sheaf and $X$ is regular, $\sL$ is invertible.  Thus $\Hom_R(\omega_R, \pi_* \O_X(\lceil K_X - {N \over N+1} \Div_X(f)\rceil ))$ is an integrally closed $\m$-primary ideal of finite projective dimension.  Therefore $R$ is regular by
\autoref{cor.CorsoHunekeKatzVasconcelos}.
\end{proof}

\begin{remark}
\label{rmk.MixedCharacteristicSurface}
We believe that the above proof can be run (essentially without change) for excellent surfaces even in mixed characteristic.
The key facts we need are that Grauert-Riemenschneider still holds for excellent surfaces \cite[Corollary 2.10]{LipmanDesingularizationOf2Dimensional} and that we can choose a general element $f$ in $\fram^{N+1}$ so that $\mJ(R, f^{N/N+1})$ is $\fram$-primary \cite{TrivediLocalBertini,TrivediLocalBertiniErratum} (using that $R$ is regular outside of the origin since we may reduce to the case that $R$ is normal).
\end{remark}
\bibliographystyle{alpha}
\bibliography{MainBib}
\end{document}